\newtheorem{theorem}{Theorem}
\newtheorem{lemma}{Lemma}
\renewcommand{\Pr}{\mathbb{P}}
\newcommand{\E}{\mathbb{E}}
\newcommand{\R}{\mathbb{R}}
\newcommand{\Bhat}{\hat{B}}
\newcommand{\bmu}{\mathbf{\mu}}
\newcommand{\Pois}{\operatorname{Pois}}
\newcommand{\iid}{\stackrel{\text{i.i.d.}}{\sim}}
\begin{document}

\title{Fast Generation of Exchangeable Sequence of Clusters Data}
\author{Keith Levin}
\affil{\small University of Wisconsin-Madison}
\author{Brenda Betancourt}
\affil{\small NORC at the University of Chicago}

\maketitle

\begin{abstract}
Recent advances in Bayesian models for random partitions have led to the formulation and exploration of Exchangeable Sequences of Clusters (ESC) models.
Under ESC models, it is the cluster sizes that are exchangeable, rather than the observations themselves.
This property is particularly useful for obtaining microclustering behavior, whereby cluster sizes grow sublinearly in the number of observations, as is common in applications such as record linkage, sparse networks and genomics.
Unfortunately, the exchangeable clusters property comes at the cost of projectivity.
As a consequence, in contrast to more traditional Dirichlet Process or Pitman-Yor process mixture models, samples a priori from ESC models cannot be easily obtained in a sequential fashion and instead require the use of rejection or importance sampling.
In this work, drawing on connections between ESC models and discrete renewal theory, we obtain closed-form expressions for certain ESC models and develop faster methods for generating samples a priori from these models compared with the existing state of the art.
In the process, we establish analytical expressions for the distribution of the number of clusters under ESC models, which was unknown prior to this work.
\end{abstract}

\section{Introduction} \label{sec:intro}

Random partitions are integral to a variety of Bayesian clustering methods, with applications in
text analysis~\citep{BleNgJor2003,Blei2012}
genetics~\citep{PriSteDon2000,FalStePri2003},
entity resolution~\citep{BinSte2022}
and
community detection~\citep{LegRigDurDun2020}, to name but a few.
Most prominent among random partition models are those based on Dirichlet processes and Pitman-Yor processes \citep{Antoniak1974,sethuraman94constructive,ishwaran03generalized}, including the famed Chinese Restaurant Process (CRP).
One major drawback of these models is that they generate partitions in which one or more cells of the partition grows linearly in the number of observations $n$.
This property is undesirable in applications to, for example, record linkage and social network modeling, where data commonly exhibits a large number of small clusters.
For these applications, a different mechanism is needed that better captures the growth of cluster sizes with $n$.

The solution to this issue is to deploy models with the {\em microclustering} property, whereby the size of the largest cluster grows sublinearly in the number of observations $n$.
An early attempt to develop microclustering models appeared in \cite{ZanellaETALneurips2016}.
The authors were motivated by record linkage applications~\citep{BinSte2022} where clusters are expected to remain small even as the number of observations increases.
This initial class of models, constructed under the Kolchin representation of Gibbs partitions~\citep{kolchin1971problem}, places a prior $\kappa$ on the number of clusters $K$, and then draws from a distribution $\mu$ over cluster sizes conditional on $K$.
This approach is comparatively simple, admitting an algorithm that facilitates sampling a priori and a posteriori similar to the Chinese Restaurant Process \citep[CRP;][]{aldous1985}.
Unfortunately, the distributions of the number of clusters and the size of a randomly chosen cluster are not straightforwardly related to the priors $\kappa$ and $\mu$.
More to the point, it is not yet theoretically proven that this family of models indeed exhibits the microclustering property.

More recently, \cite{BetZanSte2020} considered a different approach to the microclustering property, called {\em Exchangeable Sequences of Clusters} (ESC) models.
These models belong to the class of finitely exchangeable Gibbs partitions~\citep{GnePit2006,Pitman2006}, named for the fact that the cluster sizes $S_1,S_2,\dots$ are finitely exchangeable.
An ESC model is specified by a distribution $\mu$ over cluster sizes (or a prior over such distributions), and a partition is generated by drawing cluster sizes independently from $\mu$ conditional on the event that these cluster sizes sum to $n$.
That is, having specified a distribution $\mu$ on the positive integers, we draw cluster sizes $S_1,S_2,\dots$ i.i.d.\ $\mu$, conditional on the event
\begin{equation} \label{eq:def:En}
E_n = \left\{ \exists K : \sum_{j=1}^K S_j = n \right\}.
\end{equation}
The advantage of this model is that the prior $\mu$ straightforwardly encodes a distribution over cluster sizes, in the sense that the size of a randomly chosen cluster is (in the large-$n$ limit) distributed according to $\mu$ \citep[][Theorem 2]{BetZanSte2020}.
Furthermore, unlike the model proposed in \cite{ZanellaETALneurips2016}, the microclustering property has been theoretically established for ESC models \citep[][Theorem 3]{BetZanSte2020}.

While ESC models are more interpretable and have better-developed theory than previously-proposed microclustering models, there is no known relationship between the cluster size distribution $\mu$ and the number of clusters $K$ under these models.
Recently, \cite{NatIorHeiMayGle2021} (Proposition 2) established the distribution of the number of clusters $K$ for the case where $\mu$ is a shifted negative binomial, one of the specific models first proposed by \cite{BetZanSte2020}. 
\cite{BysArbKinDes2020} established the behavior of $K$ under a related class of Gibbs-type processes.
Nonetheless, a general description of the behavior of $K$ under ESC models remains open.
Additionally, since ESC models require conditioning on $E_n$, previous approaches to sampling a priori amount to drawing repeatedly from $\mu$ and checking whether or not the cluster sizes $S_1,S_2,\dots$ satisfy the condition in event $E_n$.
In this paper, we resolve both of these issues by
\begin{enumerate}
\item Establishing analytic expressions for the distribution of the number of clusters under ESC models by relating the ESC generative process to known results in renewal theory and enumerative combinatorics.
\item Leveraging these connections with enumerative combinatorics to more efficiently sample from ESC models.
\end{enumerate}

\subsection{Related work on ESC models}
Apart from the prior works outlined above, the literature on the microclustering property is scarce but diverse.
Previous work includes models that sacrifice finite exchangeability to handle data with a temporal component~\citep[e.g., arrival times][]{DiBenedetto2017}, general finite mixture models with constraints on cluster sizes~\citep{klami2016probabilistic, jitta2018controlling, silverman2017bayesian}, and models for sparse networks based on random partitions with power-law distributed cluster sizes~\citep{Bloem-Reddy18}.
Recently, \cite{LeeSan2022} considered the question of {\em balance} in cluster sizes, as encoded by majorization of cluster size vectors. 
Clearly, this is an emergent area of research with a variety of applications for which efficient sampling alternatives are crucial.



\section{Main Results} \label{sec:results}

We begin by defining the ESC model more rigorously.
Our goal is to generate a partition of $[n] = \{1,2,\dots,n\}$.
Under the ESC model, this is done by first selecting a distribution $\bmu$ on the positive integers according to a prior $P_\bmu$.
Having picked such a distribution $\bmu$, the ESC model generates partition sizes by drawing $S_1,S_2,\dots$ i.i.d.\ from $\bmu$, conditional on the event $E_n$ defined in Equation~\eqref{eq:def:En}, according to the following procedure:
\begin{enumerate}
\item Draw $\bmu \sim P_{\bmu}$
\item Draw $S_1,S_2,\dots \iid \bmu$ conditional on the event $E_n$.
\item Define $K_n$ to be the unique integer
        such that $\sum_{j=1}^{K_n} S_j = n$.
\item Assign the $n$ observations to $K_n$ clusters by randomly permuting the vector
        \begin{equation*}
        (1,1,\dots,1,2,2,\dots,2,\dots,K_n,K_n,\dots,K_n)
        \end{equation*}
in which $1$ appears $S_1$ times, $2$ appears $S_2$ times, etc.
\end{enumerate}

As discussed in the introduction, this model raises two key challenges.
First, while $\bmu$ naturally encodes the (asymptotic) cluster size distribution, it is not immediately clear how to relate the behavior of the number of clusters $K_n$ to $\bmu$ or to our prior $P_{\bmu}$.
This raises a challenge for the purposes of interpretability and usability of the model.
Second, generating samples a priori from this distribution is non-trivial, since one must condition on the event $E_n$ that $\sum_{j=1}^{K_n} S_j = n$.
We address both of these concerns by drawing on the connections between the ESC model, renewal theory and enumerative combinatorics.

\subsection{Generating ESC Clusterings}
\label{subsec:theory:gen}

Let us consider the matter of generating clusterings from ESC models.
\cite{BetZanSte2020} suggest drawing $S_1,S_2,\dots$ i.i.d.\ according to $\bmu$ until $\sum_{j=1}^k S_j \ge n$ for some $k \le n$.
If equality holds, then $(S_1,S_2,\dots,S_k)$ is a valid sequence of cluster sizes (i.e., the event $E_n$ holds), otherwise a new sequence is generated.
Unfortunately, on average, this procedure must be repeated $1/\Pr[E_n\mid \bmu]$ times before a valid sequence is generated.
Thus, crucial to this approach is that $\Pr[E_n \mid \bmu]$ be bounded away from zero for large $n$.
This fact is established in \cite{BetZanSte2020} for the case where $\bmu$ has finite mean by identifying the cluster sizes $S_1,S_2,\dots$ with the waiting times of a discrete renewal process and appealing to the following result \citep[see, for example, Theorem 2.6 in][]{renewal}.
\begin{lemma} \label{lem:renewal}
Let $\bmu$ be a distribution on the positive integers with finite mean and generate $S_1,S_2,\dots \iid \bmu$.
With $E_n$ as defined in Equation~\eqref{eq:def:En},
\begin{equation*}
\lim_{n \rightarrow \infty} \Pr[ E_n \mid \bmu ] = \frac{1}{\E[ S_1 \mid \bmu]}.
\end{equation*}
\end{lemma}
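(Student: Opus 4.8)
The plan is to recognize $\Pr[E_n \mid \bmu]$ as the renewal mass function of a discrete renewal process and to extract its limit. Identify the cluster sizes $S_1, S_2, \dots$ with the i.i.d.\ inter-arrival times of a renewal process on $\bbZpositive$, so that $E_n$ is exactly the event that $n$ is a renewal epoch, i.e.\ that some partial sum $\sum_{j=1}^k S_j$ lands on $n$. Writing $u_n = \Pr[E_n \mid \bmu]$ for this renewal mass function (with $u_0 = 1$) and $p_k = \bmu(\{k\})$, conditioning on the value of the first inter-arrival time $S_1$ yields the discrete renewal equation
\begin{equation*}
u_n = \sum_{k=1}^{n} p_k \, u_{n-k}, \qquad n \ge 1,
\end{equation*}
since $n$ is hit precisely when $S_1 = k \le n$ and the process restarted after the first jump subsequently hits $n-k$.

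First I would encode this recursion through generating functions. Setting $U(s) = \sum_{n \ge 0} u_n s^n$ and $P(s) = \sum_{k \ge 1} p_k s^k = \E[s^{S_1} \mid \bmu]$, the convolution on the right-hand side of the renewal equation gives $U(s) = 1 + P(s) U(s)$, hence $U(s) = (1 - P(s))^{-1}$ for $|s| < 1$. As $s \uparrow 1$ we have $P(s) \to 1$, and because $\bmu$ has finite mean $m := \E[S_1 \mid \bmu]$ the difference quotient satisfies $(1 - P(s))/(1 - s) \to P'(1^-) = m$. Consequently $(1-s)\,U(s) \to 1/m$, so the sequence $(u_n)$ is Abel-summable to $1/m$. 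Equivalently, and more probabilistically, the elementary renewal theorem gives $N^{-1} \sum_{n=1}^N u_n = N^{-1}\E[\#\{\text{renewals in } [1,N]\}] \to 1/m$, the Cesàro version of the claim, directly from the strong law applied to the partial sums $\sum_{j \le k} S_j \approx k m$.

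The \emph{genuine obstacle} is upgrading this Abel/Cesàro convergence to the pointwise statement $\lim_n u_n = 1/m$; this is exactly the content of the discrete renewal theorem (Erd\H{o}s--Feller--Pollard). The standard route is a coupling argument: run two independent copies of the renewal process, one started fresh and one with a suitable delay, and show that their renewal indicators agree after the copies first meet, so that $u_n$ converges to the stationary value $1/m$. This step requires aperiodicity: if the support of $\bmu$ lies in $d\,\bbZ$ for some $d > 1$, then $u_n = 0$ whenever $d \nmid n$ and no pointwise limit exists, only the subsequential limit $d/m$ along multiples of $d$. Aperiodicity holds automatically whenever $\gcd$ of the support of $\bmu$ equals $1$, in particular whenever $\bmu(\{1\}) > 0$, which is the case for the ESC models of interest since singleton clusters carry positive probability. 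Under that mild condition the coupling argument closes the proof; absent it, one would restate the limit along the lattice generated by the support.
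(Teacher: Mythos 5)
Your proposal is correct and takes essentially the same route as the paper, which simply identifies $S_1,S_2,\dots$ with the inter-arrival times of a discrete renewal process and cites the discrete renewal (Erd\H{o}s--Feller--Pollard) theorem; you go further only by sketching the proof of that cited theorem (renewal equation, generating functions, coupling). Your aperiodicity caveat is well taken: the pointwise limit requires that the support of $\bmu$ have $\gcd$ equal to $1$, a hypothesis the lemma as stated leaves implicit.
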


Trouble arises in the event that $\E[ S_1 \mid \bmu ]$ is large (or infinite), since then we may need to generate many samples $S_1,S_2,\dots$ from $\bmu$ before the sampler generates a usable sequence.
To alleviate this issue and allow for the possibility that $\bmu$ has infinite expectation, we propose an alternative approach to generating cluster sizes conditional on $E_n$.
We begin by writing, for positive integers $s_1,s_2,\dots$,
\begin{equation} \label{eq:condprob}
\begin{aligned}
\Pr& \left[ S_1=s_1,S_2=s_2,\dots \mid E_n, \bmu \right] \\
~~~~~~&= \Pr\left[ S_1=s_1 \mid E_n, \bmu \right] 
  \Pr\left[ S_2=s_2,S_3=s_3,\dots \mid S_1=s_1, E_n, \bmu \right].
\end{aligned}
\end{equation}
Since the variables $S_1,S_2,\dots$ are drawn i.i.d., we have
\begin{equation} \label{eq:seqshift}
\Pr\left[ S_2=s_2,S_3=s_3,\dots \mid S_1=s_1, E_n, \bmu \right]
= \Pr\left[ S_1=s_2,S_2=s_3,\dots \mid E_{n-s_1}, \bmu \right].
\end{equation}
Similarly,
\begin{equation*}
\Pr\left[ E_n \mid S_1=s_1, \bmu \right]
= \Pr\left[ \exists k : \sum_{j=2}^k S_j = n-s_1 ~\Big|~ \bmu \right]
= \Pr\left[ E_{n-s_1} \mid \bmu \right],
\end{equation*}
from which we have
\begin{equation*}
\Pr\left[ S_1=s_1 \mid E_n, \bmu \right]
= \frac{ \Pr\left[ E_n \mid S_1=s_1, \bmu \right] \Pr[ S_1 = s_1 \mid \bmu ] }
        { \Pr\left[ E_n \mid \bmu \right] }
= \frac{ \Pr\left[ E_{n-s_1} \mid \bmu \right] \bmu_{s_1} }
        { \Pr\left[ E_n \mid \bmu \right] }.
\end{equation*}
Plugging this and Equation~\eqref{eq:seqshift} into Equation~\eqref{eq:condprob}
, we have, for $1 \le s_1 \le n$,
\begin{equation} \label{eq:seqstep}
\Pr\left[ S_1=s_1,S_2=s_2,\dots \mid E_n, \bmu \right]
= \frac{
        \Pr\left[ S_1=s_2,S_2=s_3,\dots \mid E_{n-s_1}, \bmu \right]
        \Pr\left[ E_{n-s_1} \mid \bmu \right]
        \bmu_{s_1} 
        }
        { \Pr[ E_n \mid \bmu ] }.
\end{equation}
This equation suggests a recursive approach to generating cluster size sequences, which we formalize in Algorithm~\ref{alg:main}.
Crucially, we note that this algorithm avoids the runtime dependence on $\Pr[E_n\mid \bmu ]$ exhibited by the na\"{i}ve rejection sampling approach.

\begin{algorithm} \label{alg:main}
\caption{Given distribution $\bmu = ( \mu_n )_{n=1}^\infty$, generate $S_1,S_2,\dots \mid E_n$.} \label{alg:genpart}
\begin{algorithmic}[1]
\State Compute the sequence $\Pr[E_t \mid \bmu]$ for $t=1,2,\dots,n$.
\State $m \gets n;~~~k \gets 1$
\While{$m > 0$}
  \State Draw $X_k$ according to $\Pr[ X_k = s; m ] = \bmu_{s} \Pr[E_{m-s} \mid \bmu] / \Pr[ E_m \mid \bmu ]$ for $s \in \{1,2,\dots,m \}$
  \State $m \gets m-X_k;~~~k \gets k+1$
\EndWhile

\State Return $(X_1,X_2,\dots,X_{k-1})$
\end{algorithmic}
\end{algorithm}

\begin{theorem} \label{thm:algmain:correctness}
For any $s_1,s_2,\dots \in [n]$ satisfying $\sum_{j=1}^k s_j = n$, the sequence $(X_1,X_2,\dots,X_k)$ generated by Algorithm~\ref{alg:genpart} satisfies
\begin{equation*}
\Pr\left[ X_1=s_1,X_2=s_2,\dots,X_k=s_k \mid \bmu \right]
=
\Pr\left[ S_1=s_1,S_2=s_2,\dots,S_k=s_k \mid E_n, \bmu \right]
\end{equation*}
\end{theorem}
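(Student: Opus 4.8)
The plan is to prove the claim by induction on $n$, exploiting the fact that Algorithm~\ref{alg:genpart} is a direct recursive implementation of the decomposition in Equation~\eqref{eq:seqstep}. Before the induction proper, I would first verify that the per-step draw in line~4 is a well-defined probability distribution and that the loop terminates. Summing the unnormalized weights over $s \in \{1,\dots,m\}$ gives $\sum_{s=1}^m \bmu_s \Pr[E_{m-s}\mid\bmu]$, and by first-step analysis of the renewal process---conditioning on the first waiting time and using that a process started at $S_1 = s \le m$ hits $m$ exactly when the shifted remainder hits $m-s$---this sum equals $\Pr[E_m\mid\bmu]$, with the convention $\Pr[E_0\mid\bmu]=1$ since the empty sum attains $0$. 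Hence the weights in line~4 normalize to one, and the same identity shows the loop reaches $m=0$ almost surely, so the returned tuple is a valid cluster-size sequence summing to $n$.

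For the induction, the base case is $n=0$: the loop never executes, the output is the empty sequence, and this trivially matches the conditional law given $E_0$, under which there are no clusters. For the inductive step, fix a target $(s_1,\dots,s_k)$ with $\sum_{j=1}^k s_j = n \ge 1$. Conditioning on the first draw, the probability that the algorithm emits this sequence factors as the line-4 probability of drawing $X_1 = s_1$ at mass $m=n$, namely $\bmu_{s_1}\Pr[E_{n-s_1}\mid\bmu]/\Pr[E_n\mid\bmu]$, multiplied by the probability that the remaining run---which is an independent invocation of the identical procedure started at mass $m = n-s_1 < n$---returns $(s_2,\dots,s_k)$.

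Since $n - s_1 < n$, the inductive hypothesis identifies this second factor with $\Pr[S_1=s_2,\dots,S_{k-1}=s_k \mid E_{n-s_1},\bmu]$. Substituting both factors gives
\begin{equation*}
\frac{\bmu_{s_1}\Pr[E_{n-s_1}\mid\bmu]}{\Pr[E_n\mid\bmu]}
\,\Pr\left[S_1=s_2,\dots,S_{k-1}=s_k \mid E_{n-s_1},\bmu\right],
\end{equation*}
which is exactly the right-hand side of Equation~\eqref{eq:seqstep}, and therefore equals $\Pr[S_1=s_1,\dots,S_k=s_k\mid E_n,\bmu]$, completing the step.

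The main obstacle is bookkeeping rather than mathematical depth: one must carefully align the index shift between the algorithm's recursion and the distributional shift recorded in Equation~\eqref{eq:seqshift}, namely that generating $S_2,S_3,\dots$ given $S_1=s_1$ and $E_n$ coincides in law with generating a fresh sequence given $E_{n-s_1}$. Keeping the conditioning events and the re-indexing of the sequence consistent across the induction---and confirming that the algorithm's last draw $X_{k-1}=s_k$ corresponds to the terminal case $m-s_k=0$ with $\Pr[E_0\mid\bmu]=1$---is where care is required; once Equation~\eqref{eq:seqstep} is invoked, the remainder is a direct substitution.
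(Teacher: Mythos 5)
Your proposal is correct and follows essentially the same route as the paper's proof in Appendix~\ref{apx:alg}: both arguments rest on the recursive structure of Algorithm~\ref{alg:genpart} together with repeated application of Equation~\eqref{eq:seqstep}, and your induction on $n$ is just a formalization of the paper's explicit telescoping product, which evaluates both sides to the common closed form $\bigl(\prod_{j=1}^k \bmu_{s_j}\bigr)/\Pr[E_n\mid\bmu]$. The one genuine addition in your write-up is the verification, via the renewal equation $\Pr[E_m\mid\bmu]=\sum_{s=1}^m \bmu_s\Pr[E_{m-s}\mid\bmu]$, that the weights in line~4 normalize to one and the loop terminates --- a point the paper's proof takes for granted.
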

\begin{proof}
This follows from the construction of Algorithm~\ref{alg:main} and repeated application of Equation~\eqref{eq:seqstep}.
A detailed proof can be found in Appendix~\ref{apx:alg}.
\end{proof}

Algorithm~\ref{alg:main} generates samples from an ESC model without the rejection sampling approach initially proposed in \cite{BetZanSte2020}, provided that we can compute $ \Pr[ E_n \mid \bmu ]$ for arbitary choices of $n \ge 0$.
Viewing the cluster sizes $S_1,S_2,\dots$ as the waiting times of a discrete-time renewal process~\citep{renewal}, $E_n$ corresponds to the event that a renewal occurs at time $n$.
A key result from renewal theory relates $\Pr[ E_n \mid \bmu ]$ and the cluster size distribution $\bmu$ via their generating functions.
Let $M(s)$ denote the ordinary moment generating function of $\bmu = ( \mu_n )_{n=1}^\infty$.
That is, for $s \ge 0$,
\begin{equation*} 
M(s) = \sum_{k=0}^\infty \mu_k s^k,
\end{equation*}
where $\mu_0 = 0$ by assumption (i.e., in the language of the ESC model, there are no empty clusters; in the language of renewal theory, waiting times are positive).
For each $n=0,1,2,\dots$, let $u_n = \Pr[ E_n ]$, with $u_0 = 1$ by convention (i.e., a renewal always occurs at time $0$).
Letting $U(s)$ be the generating function of the sequence $(u_n)_{n=0}^\infty$, one can show \citep[see, e.g.,][Proposition 2.1]{renewal}
that for all $s \ge 0$,
\begin{equation} \label{eq:renewal:OGF}
U(s) = \sum_{k=0}^\infty u_k s^k  = \frac{ 1 }{ 1 - M(s) }.
\end{equation}
This suggests a natural approach to computing $\Pr[E_n \mid \bmu ] = u_n$ using the fact that $u_n$ can be determined from the $n$-th derivative of $U(s)$ evaluated at $s=0$.
Defining the functions $f(z) = 1/z$ and $g(s) = 1-M(s)$, observe that for all $n = 0,1,2,\dots$,
\begin{equation*}
U^{(n)}(s) = \frac{ d^n }{ d s^n } f( g(s) ) 
\end{equation*}
and for $n=1,2,\dots$, we have
\begin{equation*}
f^{(n)}(z) = \frac{ (-1)^{n} n! }{ z^{n+1} },
~~~\text{ and }~~~
g^{(n)}(s) = -M^{(n)}(s).
\end{equation*}
Applying Fa\'{a} di Bruno's formula \citep[][Theorem 11.4]{Charalambides2002},
\begin{equation*} \begin{aligned}
U^{(n)}(s)
&= 
\frac{ d^n }{ d s^n } f( g(s) )
=
\sum_{k=1}^n f^{(k)}(g(s)) B_{n,k}\left(g^\prime(s),g^{\prime\prime}(s),
                                \dots,g^{(n-k+1)}(s) \right) \\
&= \sum_{k=1}^n \frac{ (-1)^k k! }{ (1- M(s))^{k+1} }
        B_{n,k}\left( -M^{\prime}(s), -M^{\prime \prime}(s),
                \dots, - M^{(n-k+1)}(s) \right),
\end{aligned} \end{equation*}
where $B_{n,k}$ is the $k$-th partial exponential Bell polynomial
\citep{Charalambides2002},
\begin{equation} \label{eq:def:ExponentialBell}
B_{n,k}(x_1,x_2,\dots,x_{n-k+1})
= \sum_{j_1,j_2,\dots,j_{n-k+1}}
	\frac{ n! }{j_1! j_2! \cdots j_{n-k+1}! }
        \prod_{i=1}^{n-k+1} \left( \frac{ x_i }{ i! } \right)^{j_i},
\end{equation}
where the sum is over all nonnegative integers $j_1,j_2,\dots,j_{n-k+1}$ satisfying $\sum_{i=1}^{n-k+1} j_i = k$ and $\sum_{i=1}^{n-k+1} i j_i = n$.
Using this identity and the fact that $M^{(k)}(0)=k! \mu_k$, we have
\begin{equation*} 
n! u_n = U^{(n)}(0) 
= \sum_{k=1}^n (-1)^k k! 
        B_{n,k}\left( -\mu_1, -2\mu_2, \dots,
                -(n-k+1)! \mu_{n-k+1} \right).
\end{equation*}
A basic property of Bell polynomials \citep[][page 412]{Charalambides2002} states that
\begin{equation} \label{eq:belliden}
B_{n,k}\left(abx_1,a^2bx_2,\dots,a^{n-k+1} b x_{n-k+1} \right)
= a^n b^k B_{n.k}\left( x_1, x_2,\dots, x_{n-k+1} \right).
\end{equation}
Using this identity with $a=1$ and $b=-1$, it follows that
\begin{equation*} 
n! u_n 
= \sum_{k=1}^n k! B_{n,k}\left( \mu_1, 2\mu_2, \dots, (n-k+1)! \mu_{n-k+1} \right),
\end{equation*}
and we have proved the following theorem.

\begin{theorem} \label{thm:PEn}
Let $\bmu$ be a probability distribution on the positive integers.
Then
\begin{equation*}
\Pr[ E_n \mid \bmu ]
= \sum_{k=1}^n \frac{ k! }{n!}
        B_{n,k}\left( \mu_1, 2\mu_2, \dots, (n-k+1)! \mu_{n-k+1} \right).
\end{equation*}
\end{theorem}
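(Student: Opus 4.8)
The plan is to read off $u_n = \Pr[E_n \mid \bmu]$ as the $n$-th Taylor coefficient of the renewal generating function in~\eqref{eq:renewal:OGF}. Since $U(s) = \sum_{k \ge 0} u_k s^k = 1/(1 - M(s))$ is a power series, we have $u_n = U^{(n)}(0)/n!$, so the whole task reduces to differentiating the composite $U = f \circ g$ with $f(z) = 1/z$ and $g(s) = 1 - M(s)$ exactly $n$ times and evaluating at the origin. I would take~\eqref{eq:renewal:OGF} as given (it follows from the renewal equation $U = 1 + MU$ in generating-function form) and treat the computation either as one with analytic functions near $0$ or purely formally at the level of power-series coefficients.

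First I would record the elementary derivatives of the two pieces: $f^{(k)}(z) = (-1)^k k!\, z^{-(k+1)}$ for $k \ge 1$, and $g^{(j)}(s) = -M^{(j)}(s)$ for $j \ge 1$, noting that $g(0) = 1 - M(0) = 1$ because the convention $\mu_0 = 0$ forces $M(0) = 0$. This last observation is the crucial structural fact: it guarantees that $g$ does not vanish at the evaluation point, so $f$ and all of its derivatives are finite there and the composition formula applies without obstruction. I would then invoke Fa\'{a} di Bruno's formula in its partial-Bell-polynomial form, which expands $(f \circ g)^{(n)}$ as $\sum_{k=1}^n f^{(k)}(g(s))\, B_{n,k}\bigl(g'(s), \dots, g^{(n-k+1)}(s)\bigr)$. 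Substituting the derivatives above, setting $s = 0$, and using $M^{(j)}(0) = j!\mu_j$ yields
\[
n! \, u_n = \sum_{k=1}^n (-1)^k k! \, B_{n,k}\bigl(-\mu_1, -2\mu_2, \dots, -(n-k+1)!\,\mu_{n-k+1}\bigr).
\]

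The only remaining step, and the sole place demanding care, is clearing the accumulated signs. Here I would apply the homogeneity identity~\eqref{eq:belliden} with $a = 1$ and $b = -1$, which gives $B_{n,k}(-x_1, \dots, -x_{n-k+1}) = (-1)^k B_{n,k}(x_1, \dots, x_{n-k+1})$; the factor $(-1)^k$ this produces cancels the $(-1)^k$ already standing in front of each term, leaving $n! \, u_n = \sum_{k=1}^n k!\, B_{n,k}(\mu_1, 2\mu_2, \dots, (n-k+1)!\,\mu_{n-k+1})$, which is the claimed formula after dividing by $n!$. I expect the main obstacle to be purely one of bookkeeping: correctly threading the indices and scaled arguments $j!\mu_j$ into $B_{n,k}$ and matching the sign accounting to the homogeneity identity, rather than any genuine analytic difficulty, since the nonvanishing of $g(0)$ removes the one potential singularity.
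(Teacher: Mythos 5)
Your proposal is correct and follows essentially the same route as the paper: the renewal identity $U(s) = 1/(1-M(s))$, Fa\'{a} di Bruno's formula applied to $f(z)=1/z$ composed with $g(s)=1-M(s)$, evaluation at $s=0$ using $M^{(j)}(0)=j!\mu_j$, and the homogeneity identity~\eqref{eq:belliden} with $a=1$, $b=-1$ to cancel the signs. Your explicit remark that $g(0)=1-M(0)=1$ (so the composition is nonsingular at the evaluation point) is a detail the paper leaves implicit, but otherwise the arguments coincide.
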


\paragraph{Example: ESC-Poisson.}
Consider the case in which the sequence $( \mu_n )_{n=0}^\infty$ is given by
\begin{equation*}
\mu_k 
= \begin{cases}
\frac{ \lambda^{k-1} e^{-\lambda} }{ (k-1)! }
= \frac{ k e^{-\lambda} }{ \lambda } \frac{ \lambda^k }{ k! }
        &\mbox{ if } k = 1,2,\dots \\
0 &\mbox{ if } k=0.
\end{cases}
\end{equation*}
That is, cluster sizes are shifted Poisson random variables.
Applying Theorem~\ref{thm:PEn},
\begin{equation*} \begin{aligned}
\Pr[E_n \mid \lambda ]
&= \sum_{k=1}^n \frac{ k! }{ n! }
  B_{n,k}\left( \frac{ e^{-\lambda} }{\lambda} \lambda,
                2 \frac{ e^{-\lambda} }{\lambda} \lambda^2, \dots,
                (n-k+1) \frac{ e^{-\lambda} }{\lambda} \lambda^{n-k+1}
        \right) \\
&= \sum_{k=1}^n \frac{ k! e^{-k\lambda} \lambda^{n-k} }{ n ! }
                B_{n,k}\left( 1,2,\dots,(n-k+1) \right),
\end{aligned} \end{equation*}
where we have used the property in Equation~\eqref{eq:belliden}.
A basic Bell polynomial identity~\citep[][page 135]{Comtet1974} states that
\begin{equation} \label{eq:bell:idempotent}
  B_{n,k}\left(1,2,\dots,(n-k+1) \right)
  = \binom{n}{k} k^{n-k}.
\end{equation}
Applying this identity, we conclude that
\begin{equation} \label{eq:pois:un}
\Pr[ E_n \mid \lambda ]
= \sum_{k=1}^n \frac{ e^{-k\lambda } (k \lambda)^{n-k} }{ (n-k)! }
= \sum_{k=1}^n \Pois( n-k ; k\lambda ),
\end{equation}
where $\Pois( \cdot ; \lambda )$ denotes the probability mass function of a Poisson random variable with rate parameter $\lambda$.
Appendix~\ref{apx:examples} includes similar computations for other cluster size distributions.

\subsection{Behavior of the number of clusters $K_n$}
\label{subsec:theory:K}

The number of clusters $K_n$ is the (random) number $k$ such that
$\sum_{j=1}^k S_j = n$, again conditional on the event $E_n$ to ensure that such a $k$ exists.
We begin by observing that
\begin{equation*}
E_n = \cup_{k=1}^n \left\{ K_n = k \right\},
\end{equation*}
whence for $k=1,2,\dots,n$,
\begin{equation*} \begin{aligned}
\Pr[ K_n = k \mid E_n, \bmu ]
&= \frac{1}{\Pr[ E_n \mid \bmu ] }\sum_{s_1,s_2,\dots,s_k}
	\Pr[ S_1=n_1, S_2=s_2, \dots, S_k=s_k \mid E_n, \bmu ] \\
&= \frac{1}{\Pr[ E_n \mid \bmu ] } \sum_{s_1,s_2,\dots,s_k}
	\prod_{j=1}^k \bmu_{s_j},
\end{aligned} \end{equation*}
where the sum is over all $s_1,s_2,\dots,s_k$ satisfying $\sum_{j=1}^k s_j = n$.
Equivalently, using basic properties of partitions of $[n]$, we can express this sum as
\begin{equation} \label{eq:Kn:permute}
\Pr[ K_n = k \mid E_n, \bmu ]
= \frac{1}{\Pr[ E_n \mid \bmu ] } \sum_{ j_1,j_2,\dots,j_{n-k+1} }
	\frac{ k! }{ j_1! j_2 ! \cdots j_{n-k+1}! }
        	\prod_{i=1}^k \bmu_{j_i},
\end{equation}
where now the sum is over all $j_1,j_2,\dots,j_{n-k+1}$ satisfying $\sum_{i=1}^{n-k+1} j_i = k$ and $\sum_{i=1}^{n-k+1} i j_i = n$.
The sum on the right-hand side of Equation~\eqref{eq:Kn:permute} is known in the enumerative combinatorics literature as the ordinary Bell polynomial~\citep[][]{Charalambides2002},
\begin{equation} \label{eq:def:OrdinaryBell}
\Bhat_{n,k}( \mu_1, \mu_2, \dots, \mu_{n-k+1})
= \sum_{j_1,j_2,\dots,j_{n-k+1}} \frac{ k! }{ j_1! j_2 ! \cdots j_{n-k+1}! }
        \prod_{i=1}^k \bmu_{j_i},
\end{equation}
and can be related to the exponential Bell polynomial defined in Equation~\eqref{eq:def:ExponentialBell} according to
\begin{equation*} 
\Bhat_{n,k}( \mu_1, \mu_2, \dots, \mu_{n-k+1})
= \frac{k!}{n!}
B_{n,k}\left( 1! \mu_1, 2! \mu_2, \dots, (n-k+1)! \mu_{n-k+1} \right).
\end{equation*}
Thus, we have proved the following result.

\begin{theorem} \label{thm:Kprob}
Let $S_1,S_2,\dots,S_{K_n}$ be cluster sizes generated according to an ESC model on $n$ objects with cluster size distribution $\bmu$.
Then for $k=1,2,\dots,n$,
\begin{equation} \label{eq:probKn}
\Pr[ K_n = k \mid E_n, \bmu ] =
\frac{ \Bhat_{n,k}( \bmu )}{\Pr[ E_n \mid \bmu ] }
= \frac{k!
	B_{n,k}\left( 1! \mu_1, 2! \mu_2, \dots, (n-k+1)! \mu_{n-k+1} \right) }
	{n! \Pr[ E_n \mid \bmu ]}.
\end{equation}
\end{theorem}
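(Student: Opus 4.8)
The plan is to compute $\Pr[K_n = k \mid E_n, \bmu]$ directly by marginalizing the joint law of the cluster sizes $(S_1,\dots,S_k)$ over all length-$k$ sequences of positive integers that sum to $n$, and then to recognize the resulting sum as a Bell polynomial. I would begin from the disjoint decomposition $E_n = \bigcup_{k=1}^n \{K_n = k\}$: since $\{K_n = k\}$ records the number of renewals needed to reach $n$, these events are disjoint and exhaust $E_n$. Hence it suffices to understand, for each fixed $k$, the total mass that $\{K_n = k\}$ carries under the conditional law given $E_n$.

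Next I would argue that the joint conditional probability collapses to a product of $\bmu$-masses. Fix positive integers $s_1,\dots,s_k$ with $\sum_{j=1}^k s_j = n$. Because every $s_j \ge 1$, the event $\{S_1 = s_1,\dots,S_k=s_k\}$ forces the first partial sum equal to $n$ to occur exactly at index $k$, so this event both pins down $K_n = k$ and is contained in $E_n$. By independence of the $S_j$ and Bayes' rule, $\Pr[S_1=s_1,\dots,S_k=s_k \mid E_n, \bmu] = \prod_{j=1}^k \bmu_{s_j} / \Pr[E_n \mid \bmu]$. Summing over all such compositions yields $\Pr[K_n=k\mid E_n,\bmu] = \Pr[E_n\mid\bmu]^{-1}\sum_{s_1,\dots,s_k}\prod_{j=1}^k \bmu_{s_j}$, where the sum ranges over compositions of $n$ into $k$ positive parts.

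The crux is then a change of summation variable from ordered compositions to multiplicity vectors. For a composition $(s_1,\dots,s_k)$ set $j_i = \#\{\ell : s_\ell = i\}$ for $i = 1,\dots,n-k+1$ (no part can exceed $n-k+1$ once the other $k-1$ parts are each at least $1$). Each multiplicity vector satisfies $\sum_i j_i = k$ and $\sum_i i\,j_i = n$, is produced by exactly $k!/(j_1!\cdots j_{n-k+1}!)$ distinct orderings, and contributes $\prod_{j=1}^k \bmu_{s_j} = \prod_{i=1}^{n-k+1}\bmu_i^{\,j_i}$. Substituting reproduces Equation~\eqref{eq:Kn:permute} and identifies the sum with the ordinary Bell polynomial $\Bhat_{n,k}(\bmu)$ of Equation~\eqref{eq:def:OrdinaryBell}. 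I expect this re-indexing — correctly counting the orderings behind each multiplicity vector while tracking the two constraints $\sum_i j_i = k$ and $\sum_i i\,j_i = n$ — to be the main, if routine, bookkeeping obstacle; everything upstream is a direct consequence of i.i.d.-ness and conditioning.

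Finally, to obtain the second equality in Equation~\eqref{eq:probKn}, I would invoke the stated conversion between the ordinary and exponential Bell polynomials, namely $\Bhat_{n,k}(\bmu) = (k!/n!)\,B_{n,k}(1!\mu_1,\dots,(n-k+1)!\mu_{n-k+1})$, and divide through by $\Pr[E_n\mid\bmu]$, which Theorem~\ref{thm:PEn} renders explicit. This identifies $\Pr[K_n=k\mid E_n,\bmu]$ with both claimed expressions and completes the argument.
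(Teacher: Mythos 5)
Your proposal is correct and follows essentially the same route as the paper: decompose $\{K_n=k\}$ into compositions of $n$ into $k$ positive parts, use i.i.d.-ness and the containment of each such event in $E_n$ to reduce the conditional probability to $\prod_j \mu_{s_j}/\Pr[E_n\mid\bmu]$, re-index over multiplicity vectors to recognize the ordinary Bell polynomial, and convert to the exponential Bell polynomial. Your treatment of the composition-to-multiplicity-vector bookkeeping is in fact spelled out more carefully than in the paper's own (quite terse) derivation.
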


With this result in hand, provided we can evaluate Bell polynomials on the sequence $\bmu$, we can precisely describe the behavior of $K_n$ for a particular choice of $\bmu$ (or a prior over $\bmu$).

\paragraph{Example: Negative Binomial Cluster Sizes.}
By way of illustration, we consider the model that has received the most attention to date in the microclustering literature \citep[see, e.g.,][]{ZanellaETALneurips2016,BetZanSte2020,NatIorHeiMayGle2021}, the ESC-NB model.
Under this model, $\bmu$ takes the form of a shifted negative binomial distribution,
\begin{equation*}
\mu_k = \begin{cases}
        \binom{ k + r - 2 }{ k-1 } (1-p)^r p^{k-1}
        &\mbox{ if } k=1,2,\dots \\
        0 &\mbox{ if } k = 0,
        \end{cases}
\end{equation*}
where $p \in [0,1]$ is the probability of success and $r > 0$ is the number of failures.
To permit the possibility that $r > 0$ is not an integer, we define
\begin{equation*}
\binom{ r }{ m } = \frac{ (r)_{m} }{ m ! },
\end{equation*}
where $(r)_m$ denotes the falling factorial, $(r)_m = r(r-1)(r-2)\cdots(r-m+1)$.
Using binomial identities and properties of the Bell polynomials, we find that under the ESC-NB model,
\begin{equation} \label{eq:K:nb}
\Pr[ K_n = k \mid E_n, \bmu ]
=
\frac{ p^{n-k} (1-p)^{rk} }{ \Pr[ E_n \mid \bmu ] }
	\binom{ n +k(r-1) -1 }{ n - k },
\end{equation}
where $\Pr[ E_n \mid \bmu]$ is given by Theorem~\ref{thm:PEn}.
Thus, Theorem~\ref{thm:Kprob} applied to the ESC-NB model
recovers Proposition 2 in \cite{NatIorHeiMayGle2021} as a special case.
See Appendix~\ref{apx:examples} for details of this computation, including a closed form for $\Pr[ E_n \mid \bmu]$, and additional examples.

\section{Experiments} \label{sec:expts}

We now turn to a brief experimental investigation of our theoretical results.

\subsection{Behavior of $K_n$}
We begin by verifying that the samples generated by Algorithm~\ref{alg:main}
match their intended ESC clustering distribution
(i.e., verifying Theorem~\ref{thm:algmain:correctness}).
Theorem~\ref{thm:Kprob} establishes the distribution of the number of clusters $K_n$ under ESC models.
In particular, Equation~\eqref{eq:K:nb} gives the distribution of $K_n$ under the ESC-NB model, in which the cluster sizes are distributed according to a Negative Binomial with success parameter $p \in [0,1]$ and number of failures $r > 0$.
Figure~\ref{fig:nb_Khist} shows this distribution for $p=0.5$ and $r=2.0$.
The left-hand plot contains a histogram of 2000 draws of $K_n$, based on clusterings generated from the na\"{i}ve ESC sampling method \citep{BetZanSte2020}.
The right-hand plot contains an analogous histogram based on clusterings generated from Algorithm~\ref{alg:main}, computing the $\Pr[ E_n \mid \bmu]$ terms using Bell polynomial identities.
In both subplots, the black line indicates the distribution of $K_n$ predicted by Theorem~\ref{thm:Kprob}.
We see that both the na\"{i}ve and Bell polynomial-based algorithms yield clusterings in which the behavior of $K_n$ matches that predicted by Theorem~\ref{thm:Kprob}.

\begin{figure}[ht]
  \centering
  \includegraphics[width=0.8\columnwidth]{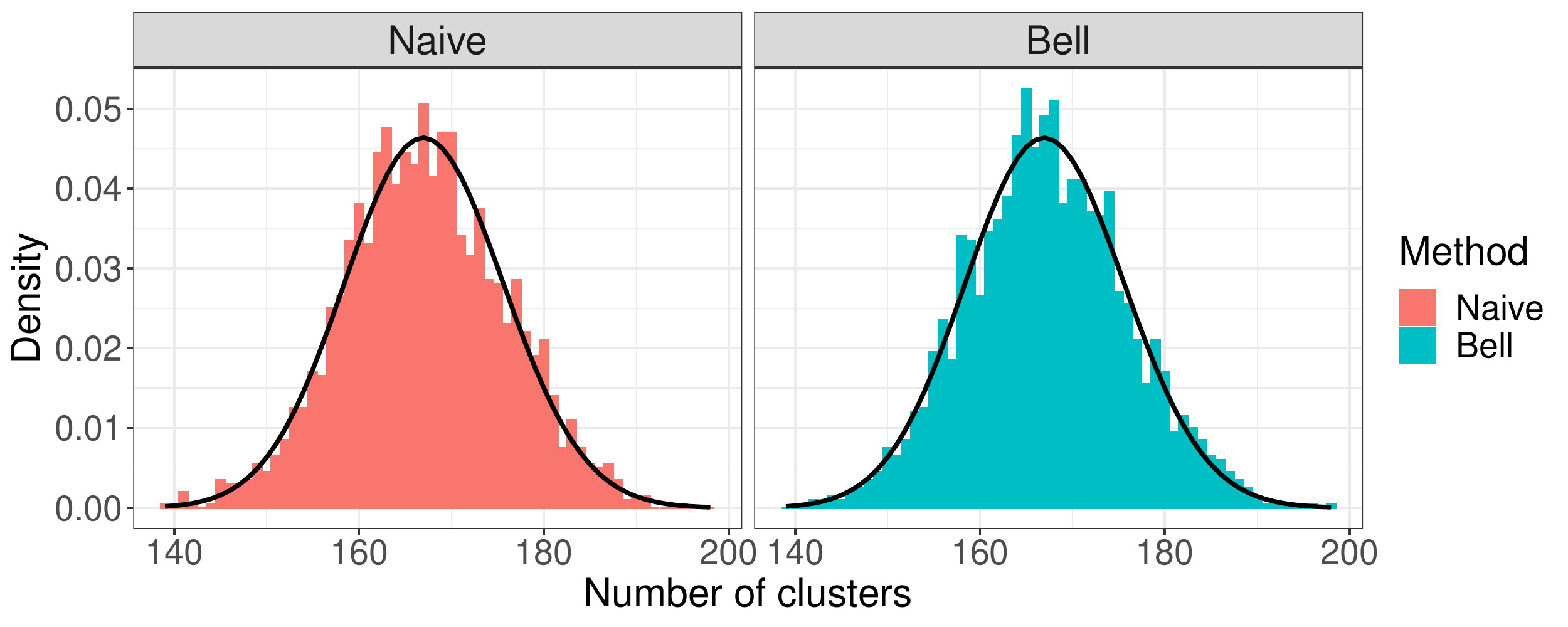}
  \vspace{-1mm}
  \caption{Histogram of $2000$ draws from the distribution of the number of clusters $K_n$ under the ESC-NB model on $n=500$ observations with Negative Binomial parameters $p=0.5$ and $r=2.0$ using the na\"{i}ve (left) and Bell polynomial-based (right) sampling algorithms.
The distribution predicted by Theorem~\ref{thm:Kprob} is indicated in black in both subplots.}
  \label{fig:nb_Khist}
\end{figure}

\subsection{Runtime Comparison}
We now turn to a comparison of our proposed sampling algorithm with the
na\"{i}ve sampling approach described in 
Section~\ref{subsec:theory:gen} and used in most previous microclustering
work~\citep[see, e.g.,][]{BetZanSte2020}.
For simplicity, we consider the ESC-Poisson model,
in which cluster sizes are drawn according to a Poisson distribution
with parameter $\lambda$.

Lemma~\ref{lem:renewal} suggests that the runtime of the
na\"{i}ve sampling algorithm is likely to be
sensitive to the mean of the cluster size distribution $\E S_1 = \lambda$.
To examine this fact, we generated partitions of $n=500$ objects
under the ESC-Poisson model with Poisson parameter $\lambda$
using both the na\"{i}ve procedure
and the procedure described in Algorithm~\ref{alg:main}.
For varying values of the Poisson parameter $\lambda$,
we performed $20$ independent repetitions, recording the runtime
required to generate clusterings under both methods.
The mean runtime over these $20$ replicates for these two methods are
summarized in Figure~\ref{fig:poistiming_fixedn_varylambda},
with the na\"ive method indicated by orange circles
and the Bell polynomial-based method indicated by teal triangles.
We see that the runtime of the na\"{i}ve sampling error depends sensitively on
the mean $\lambda$ of the cluster size distribution.
Specifically, runtimes for the na\"ive method
are orders of magnitude slower for values of $\lambda$
that do not (exactly or approximately) divide $n=500$.
Under such circumstances, if $S_1,S_2,\dots,S_k$ are such that
$\sum_{j=1}^k S_j = n$,
either all of the summands must be moderately far
from the mean $\E S_1 = \lambda$ of the cluster size distribution,
or, if most of the summands are close to $\E S_1$,
one or more must deviate significantly from it.
In either event, such sequences are of especially low probability,
and thus many sequences $S_1,S_2,\dots$ must be generated before the
event $E_n$ occurs, increasing the average runtime of the na\"{i}ve procedure.

\begin{figure}[hb]
  \centering
  \includegraphics[width=0.8\columnwidth]{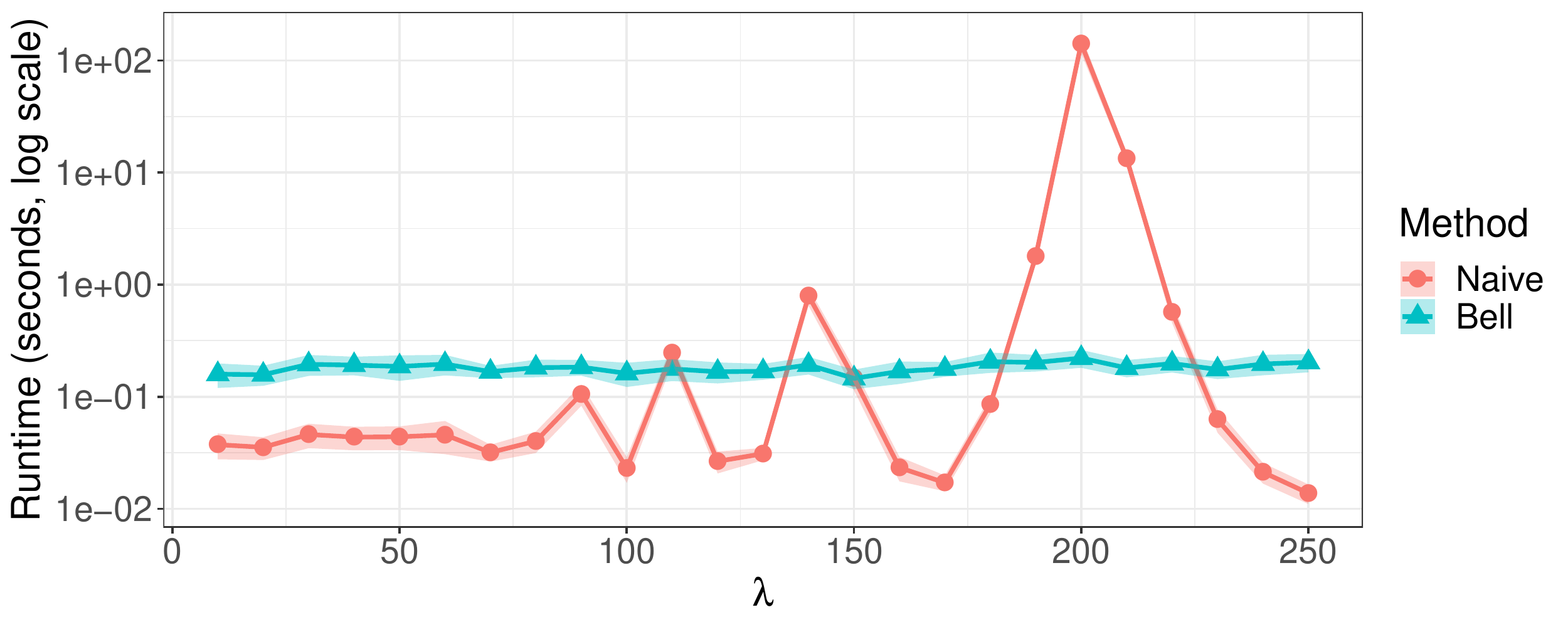}
  \vspace{-1mm}
  \caption{Runtime in seconds required by the na\"{i}ve (orange circles) and Bell polynomial-based (teal triangles) methods to generate a partition of $n=500$ objects under an ESC-Poisson model, as a function of the Poisson parameter $\lambda$. We see that the na\"ive sampling method is highly sensitive to the expected cluster size $\E S_1 = \lambda$.}
  \label{fig:poistiming_fixedn_varylambda}
\end{figure}

Further examining Figure~\ref{fig:poistiming_fixedn_varylambda},
we note that our proposed sampling method does not uniformly improve upon
the na\"ive sampling method at all values of $\lambda$.
This is owing to the fact that Algorithm~\ref{alg:main} requires
that we compute the probabilities
\begin{equation} \label{eq:mainalgo:prob}
\Pr[ X_k = s; m ]
= \frac{ \bmu_{s} \Pr[E_{m-s} \mid \bmu] }{ \Pr[ E_m \mid \bmu ] }
\end{equation}
for each $m \in [n]$ and each $s \in [m]$.
Even with access to the sequences $\bmu_m$ and $u_m = \Pr[ E_m \mid \bmu ]$
for $m \in [n]$,
constructing these probabilities ahead of time incurs a computational cost,
which is included in the runtime reported
in Figure~\ref{fig:poistiming_fixedn_varylambda}.

Figure~\ref{fig:poistiming_fixedlambda_varyn} compares
the na\"ive sampling procedure and our proposed method,
this time amortizing this up-front computational cost over $200$ sample partitions.
That is, each trial now consists of first calculating
the probabilities in Equation~\eqref{eq:mainalgo:prob},
then using those probabilities to generate $200$ clusterings
from the ESC-Poisson model.
We see that over a range of values of Poisson parameter $\lambda$
and number of observations $n$,
our proposed method improves upon the runtime of the
na\"ive sampling method by an order of magnitude.

\begin{figure}[ht]
  \centering
  \includegraphics[width=0.8\columnwidth]{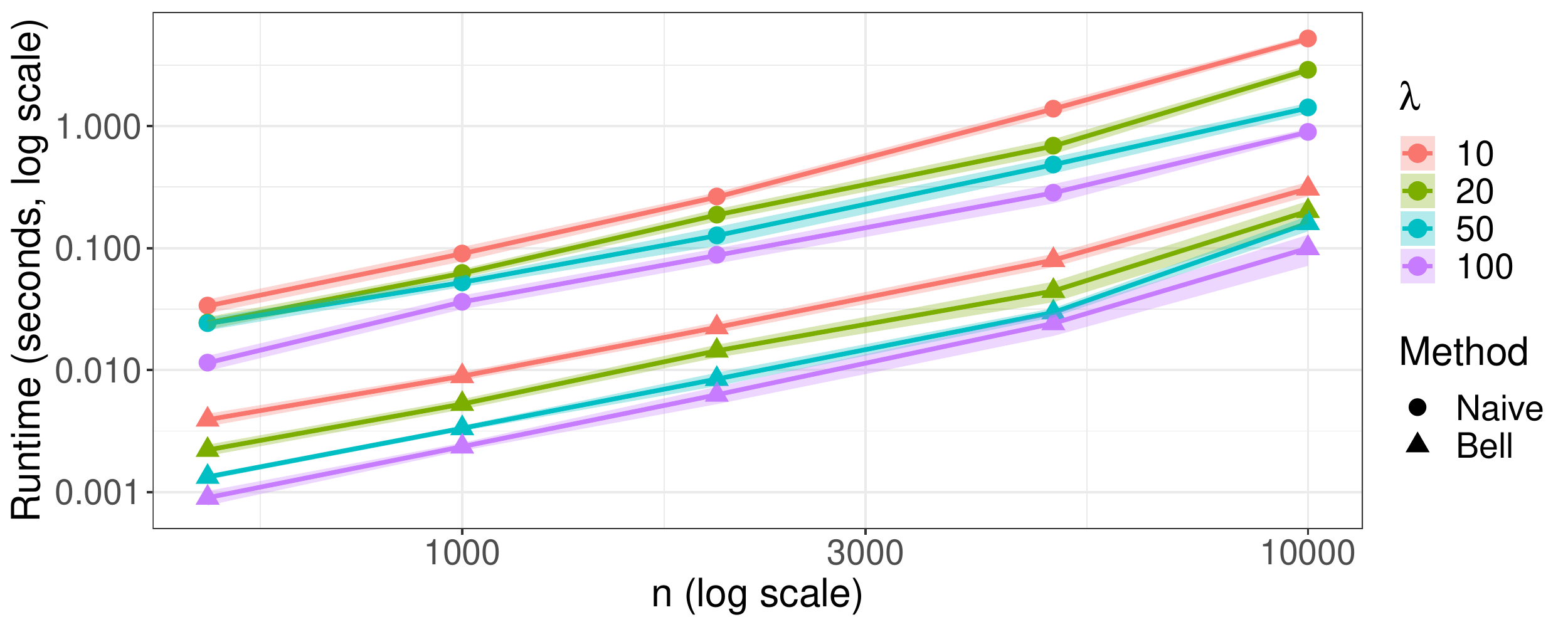}
  \vspace{-1mm}
  \caption{Amortized runtime of the na\"{i}ve ESC sampler (circles) and our proposed Bell polynomial-based method (triangles), as a function of the number of observations $n$. Cluster sizes were generated according to a Poisson distribution with varying choices of mean $\lambda$ (indicated by line color). Each point corresponds to the mean runtime over $20$ trials, with error bars indicating two standard errors of the mean. In each trial, $200$ samples were generated from the ESC-Poisson model with parameter $\lambda$, and total runtime, including up-front computation required by the Bell polynomial-based method, was recorded.}
  \label{fig:poistiming_fixedlambda_varyn}
\end{figure}

\section{Discussion and Conclusion} \label{sec:conclusion}

We have addressed two outstanding issues in ESC models: the behavior of the number of clusters $K_n$ and the matter of sampling a priori from these models.
A number of natural follow-up questions present themselves.
For example, all known results concerning the microclustering property in ESC models require that the cluster size distribution $\bmu$ have finite expectation.
It is natural to ask whether the microclustering property continues to hold if $\bmu$ has infinite expectation, and how the size of the largest cluster grows in such situations.

One possible criticism of Algorithm~\ref{alg:main} is that it requires $O(n^2)$ up-front runtime to compute the probabilities $\Pr[ X_k = s; m ]$ for all $1 \le s \le m$.
Absent particular structure in the cluster size distribution $\bmu$, it requires a new $O(n^2)$ runtime computation any time $\bmu$ is updated.
We stress that Algorithm~\ref{alg:main} is not aimed at this situation, but rather is meant for faster a priori sampling, such as in the context of prior calibration.
Nonetheless, future work should investigate speeding up the evaluation of these probabilities for use in Algorithm~\ref{alg:main}, perhaps using approximation techniques similar to those deployed in \cite{BysArbKinDes2020}.

\bibliographystyle{plainnat}
\bibliography{biblio}

\newpage
\onecolumn
\appendix

\section{Proof of Theorem~\ref{thm:algmain:correctness}}
\label{apx:alg}

Here, we give a detailed proof of the correctness of Algorithm~\ref{alg:main} as claimed by Theorem~\ref{thm:algmain:correctness}.

\begin{proof}
By construction of Algorithm~\ref{alg:main}, the variable $m$ is initialized to $m \gets n$, and thus
\begin{equation*}
\Pr[ X_1=s_1 \mid \bmu ]
=
\frac{ \bmu_{s_1} \Pr[E_{n-s_1} \mid \bmu] }{ \Pr[ E_n \mid \bmu ] }.
\end{equation*}
It follows that
\begin{equation*} \begin{aligned}
\Pr\left[ X_1=s_1,\dots,X_k=s_k \mid \bmu \right]
&= \Pr[ X_1=s_1 \mid \bmu ]
        \Pr\left[ X_2=s_2,\dots,X_k=s_k \mid X_1=s_1, \bmu \right] \\
&= \frac{ \bmu_{s_1} \Pr[E_{n-s_1} \mid \bmu] }{ \Pr[ E_n \mid \bmu ] }
        \Pr\left[ X_2=s_2,\dots,X_k=s_k \mid X_1=s_1, \bmu \right]
\end{aligned} \end{equation*}
After drawing $X_1=s_1$, Algorithm~\ref{alg:main} sets $m \gets n-s_1$ and $k \gets 2$, and draws $X_2$ according to
\begin{equation*}
\Pr\left[ X_2=s_2 \mid X_1=s_1, \bmu \right]
= \frac{ \bmu_{s_2} \Pr[E_{n-s_1-s_2} \mid \bmu] }
        { \Pr[ E_{n-s_1} \mid \bmu ] },
\end{equation*}
whence
\begin{equation*} \begin{aligned} 
\Pr\left[ X_1=s_1,\dots,X_k=s_k \mid \bmu \right]
&=
\frac{ \bmu_{s_1} \Pr[E_{n-s_1} \mid \bmu ] }{ \Pr[ E_n \mid \bmu ] }
        \cdot
        \frac{ \bmu_{s_2} \Pr[E_{n-s_1-s_2} \mid \bmu] }
                { \Pr[ E_{n-s_1} \mid \bmu ] } \\
&~~~~~~\cdot
        \Pr\left[ X_3=s_3,\dots,X_k=s_k
                \mid (X_1,X_2)=(s_1,s_2), \bmu \right].
\end{aligned} \end{equation*}
Repeating this argument, we have
\begin{equation*} \begin{aligned}
&\Pr\left[ X_1=s_1,\dots,X_k=s_k \mid \bmu \right] \\
&~= \left( \prod_{j=1}^{k-1}
        \frac{ \bmu_{s_j} \Pr[E_{n-\sum_{t=1}^j s_t} \mid \bmu] }
                { \Pr[ E_{n-\sum_{t=1}^{j-1} s_t} \mid \bmu ] } \right)
        \Pr[ X_k=s_k \mid X_1=s_1,\dots,X_{k-1}=s_{k-1},\bmu ] \\
&~= \left( \prod_{j=1}^{k-1}
        \frac{ \bmu_{s_j} \Pr[E_{n-\sum_{t=1}^j s_t} \mid \bmu] }
                { \Pr[ E_{n-\sum_{t=1}^{j-1} s_t} \mid \bmu ] } \right)
        \frac{ \bmu_{s_k} \Pr[ E_0 \mid \bmu ] }{ \Pr[ E_{n-\sum_{t=1}^{k-1} s_t} \mid \bmu ] }.
\end{aligned} \end{equation*}
Using the fact that $\Pr[ E_0 \mid \bmu ]=1$,
we conclude that
\begin{equation*} 
\Pr\left[ X_1=s_1,\dots,X_k=s_k \mid \bmu \right]
= \frac{ \left( \prod_{j=1}^k  \bmu_{s_j} \right) }{ \Pr[E_n \mid \bmu ] }.
\end{equation*}
On the other hand, repeated application of Equation~\eqref{eq:seqstep} yields
\begin{equation*}
\Pr[ S_1=s_1,S_2=s_2,\dots,S_k=s_k \mid E_n, \bmu ]
=
\frac{ \left( \prod_{j=1}^k \bmu_{s_j} \right) }{ \Pr[ E_n \mid \bmu ] }.
\end{equation*}
Comparison of the above two displays yields the result.
\end{proof}

\section{Selected Cluster Size Distributions}
\label{apx:examples}

In this section, we provide illustrative computations for several natural choices of cluster size distriubtions.

\subsection{Poisson Cluster Sizes}
\label{apx:examples:Poisson}

Under the ESC-Poisson distribution, as introduced in Section~\ref{subsec:theory:gen}, cluster sizes are drawn according to a (shifted) Poisson,
\begin{equation*}
\mu_k 
= \begin{cases}
\frac{ \lambda^{k-1} e^{-\lambda} }{ (k-1)! }
= \frac{ k e^{-\lambda} }{ \lambda } \frac{ \lambda^k }{ k! }
        &\mbox{ if } k = 1,2,\dots \\
0 &\mbox{ if } k=0.
\end{cases}
\end{equation*}

In Section~\ref{subsec:theory:gen}, we determined the form of $\Pr[E_n \mid \bmu]$ for this distribution.
Here, we use Theorem~\ref{thm:Kprob} to derive the distribution of the number of clusters $K_n$.
By Theorem~\ref{thm:Kprob}, for $k \in [n]$,
\begin{equation*} \begin{aligned}
\Pr[ K_n = k \mid E_n, \bmu ] &= \frac{ \Bhat_{n,k}( \bmu ) }{ \Pr[ E_n \mid \bmu ] }
=
\frac{ k! }{n!} B_{n,k}\left(\mu_1,2\mu_2,\dots,(n-k+1)! \mu_{n-k+1} \right) \\
&= \frac{k!
  B_{n,k}\left( \frac{ e^{-\lambda} }{\lambda} \lambda,
                2 \frac{ e^{-\lambda} }{\lambda} \lambda^2, \dots,
                (n-k+1) \frac{ e^{-\lambda} }{\lambda} \lambda^{n-k+1}
        \right) }
{n! \Pr[ E_n \mid \bmu ] } \\
&=
\left( \sum_{\ell=1}^n \frac{ e^{-\ell\lambda } (\ell \lambda)^{n-\ell} }
	{ (n-\ell)! } \right)^{-1}
\frac{ e^{-k\lambda } (k \lambda)^{n-k} }{ (n-k)! }
\end{aligned} \end{equation*}
where we have used Equations~\eqref{eq:belliden},~\eqref{eq:bell:idempotent} and~\eqref{eq:pois:un} as in the example given in Section~\ref{subsec:theory:gen}.

\subsection{Negative Binomial Cluster Sizes}
\label{apx:examples:NB}

Recall that under the ESC-NB distribution, as introduced in Section~\ref{subsec:theory:K}, the cluster sizes are drawn i.i.d.\ according to a (shifted) negative binomial,
\begin{equation*}
\mu_k = \begin{cases}
	\binom{ k + r - 2 }{ k-1 } (1-p)^r p^{k-1}
	&\mbox{ if } k=1,2,\dots \\
	0 &\mbox{ if } k = 0
	\end{cases}
\end{equation*}
where $r > 0$ and $p \in [0,1]$, and we recall that
\begin{equation*}
\binom{ r }{ m } = \frac{ ( r )_m }{ m ! },
\end{equation*}
where $(r)_m$ denotes the falling factorial.
That is, if $Z$ is a Negative Binomial random variable
with success parameter $p \in [0,1]$ and number of failures $r > 0$,
then $\mu_k = \Pr[ Z = k-1 ]$.

Section~\ref{subsec:theory:K} gives the distribution of the number of clusters $K_n$ under this cluster size distribution, up to the normalizing constant $\Pr[E_n \mid \bmu]$.
Here, we establish a closed-form expression for this normalizing term, using the tools introduced in Section~\ref{subsec:theory:gen}.
By Theorem~\ref{thm:PEn},
\begin{equation} \label{eq:NB:un}
\Pr[ E_n \mid \bmu]
= \sum_{k=1}^n \Bhat_{n,k}( \mu_1,\mu_2,\dots,\mu_{n-k+1} ).
\end{equation}
By definition of the ordinary Bell polynomials given in Equation~\eqref{eq:def:OrdinaryBell},
\begin{equation*} 
\Bhat_{n,k}( \mu_1,\mu_2,\dots,\mu_{n-k+1} )
= \sum_{j_1,j_2,\dots,j_{n-k+1}} \frac{k!}{j_1!j_2!\cdots j_{n-k+1}!}
		\prod_{i=1}^{n-k+1} \mu_i^{j_i} \\
= \sum_{s_1,s_2,\dots,s_k}
	\prod_{i=1}^k \mu_{s_k},
\end{equation*}
where the second sum is over all positive integers $s_1,s_2,\dots,s_k$
summing to $n$.
Plugging in our definitions for $\bmu$ under the negative binomial, this becomes
\begin{equation*} \begin{aligned}
\Bhat_{n,k}( \mu_1,\mu_2,\dots,\mu_{n-k+1} )
&= \sum_{s_1,s_2,\dots,s_k} \prod_{i=1}^k
	\binom{ s_i + r -2 }{ s_i-1 } (1-p)^r p^{s_i-1} \\
&= (1-p)^{rk} p^{n-k}
	\sum_{s_1,s_2,\dots,s_k} \prod_{i=1}^k
	\binom{ s_i + r -2 }{ s_i-1 },
\end{aligned} \end{equation*}
where again all sums are over positive integers $s_1,s_2,\dots,s_k$ summing to $n$.
After a change of variables, we have
\begin{equation*}
\Bhat_{n,k}( \mu_1,\mu_2,\dots,\mu_{n-k+1} )
= (1-p)^{rk} p^{n-k}
	\sum_{s_1,s_2,\dots,s_k} \prod_{i=1}^k
	\binom{ s_i + r -1 }{ s_i },
\end{equation*}
where now the sum is over all non-negative integers $s_1,s_2,\dots,s_k$ summing to $n-k$.
A basic identity for binomial coefficients \citep[][Equation 5.14]{Concrete} states that
\begin{equation} \label{eq:concrete:negation}
\binom{t}{m} = (-1)^m \binom{m-t-1}{m},
\end{equation}
which holds for all $t \in \R$ and non-negative integer $m$.
Taking $t = s_i+r-2$ and $m=s_i-1$,
\begin{equation*} \begin{aligned}
\Bhat_{n,k}( \mu_1,\mu_2,\dots,\mu_{n-k+1} )
&= \frac{ (1-p)^{rk} p^n }{ p^k }
        \sum_{s_1,s_2,\dots,s_k} \prod_{i=1}^k
	(-1)^{s_i} \binom{-r}{s_i} \\
&= \frac{ (-1)^{n-k} (1-p)^{rk} p^n }{ p^k }
        \sum_{s_1,s_2,\dots,s_k} \prod_{i=1}^k
	\binom{ -r }{ s_i }.
\end{aligned} \end{equation*}
Applying the generalized Vandermonde convolution
identity~\citep{Concrete},
a second application of Equation~\eqref{eq:concrete:negation} yields
\begin{equation*} \begin{aligned}
\Bhat_{n,k}( \mu_1,\mu_2,\dots,\mu_{n-k+1} )
&= (-1)^{n-k} (1-p)^{rk} p^{n-k} \binom{ -kr }{ n-k } \\
&= (-1)^{n-k} (1-p)^{rk} p^{n-k} (-1)^{n-k} \binom{ n-k + kr -1 }{ n - k } \\
&= p^n \left( \frac{ (1-p)^r }{ p } \right)^k \binom{ n +k(r-1) -1 }{ n - k }
\end{aligned} \end{equation*}
Plugging this back into Equation~\eqref{eq:NB:un},
\begin{equation*} 
\Pr[ E_n \mid \bmu ]
=
p^n \sum_{k=1}^n \left( \frac{ (1-p)^r }{ p } \right)^{k}
	\binom{ n +k(r-1) -1 }{ n - k }.
\end{equation*}

\subsection{Geometric Cluster Sizes}

As another illustrative example, consider the setting where
cluster sizes are distributed according to a geometric distribution,
\begin{equation*}
\mu_k = \begin{cases}
	(1-p)^{k-1} p &\mbox{ if } k=1,2,\dots \\
	0 &\mbox{ if } k = 0.
	\end{cases}
\end{equation*}
Applying Theorem~\ref{thm:PEn}, we obtain
\begin{equation*} \begin{aligned}
\Pr[ E_n \mid \bmu ]
&= \sum_{k=1}^n \frac{ k! }{ n! }
	 B_{n,k}\left( p, 2(1-p)p, 3!(1-p)^2 p,
			\dots,(n-k+1)!(1-p)^{n-k} p \right) \\
&= \sum_{k=1}^n \frac{ k! }{ n! }
		\left( \frac{ p }{1-p} \right)^k (1-p)^n
		B_{n,k}\left(1!,2!,3!,\dots,(n-k+1)! \right),
\end{aligned} \end{equation*}
where we have again used the identity in Equation~\eqref{eq:belliden}.
A basic identity \citep[][page 135]{Comtet1974} states that
\begin{equation} \label{eq:bell:lah}
  B_{n,k}\left(1!,2!,3!,\dots,(n-k+1)! \right)
  = \binom{n-1}{k-1} \frac{ n! }{ k! },
\end{equation}
from which we conclude that, after a change of variables,
\begin{equation*}
\Pr[ E_n \mid \bmu ]
= (1-p)^n \sum_{k=1}^n \binom{n-1}{k-1} 
			\left( \frac{ p }{1-p} \right)^k 
= (1-p)^{n-1} p
  \sum_{\ell=0}^{n-1} \binom{n-1}{\ell} \left( \frac{ p }{1-p} \right)^\ell
= p.
\end{equation*}

Turning to the cluster size distribution under this model, Theorem~\ref{thm:Kprob} states that
\begin{equation*}
\Pr[ K_n = k \mid E_n, \bmu ]
=
\frac{ \Bhat_{n,k}( \bmu ) }{ \Pr[ E_n \mid \bmu ] }
=
\frac{ k! }{ n! p } 
B_{n,k}( \mu_1, 2 \mu_2, \dots, (n-k+1)! \mu_{n-k+1} ).
\end{equation*}
Applying identities~\eqref{eq:belliden} and~\eqref{eq:bell:lah} yields
\begin{equation*}
\Pr[ K_n = k \mid E_n, \bmu ]
= \frac{1}{p} \left( \frac{ p }{1-p} \right)^k (1-p)^n \binom{n-1}{k-1}.
\end{equation*}

\subsection{ESC-Zipf}
\label{apx:subsec:zipf}

Consider, for $\alpha > 1$, a Zipfian cluster size distribution, given by
\begin{equation*}
\mu_k = \frac{ k^{-\alpha} }{ \zeta( \alpha ) } \text{ for } k=0,1,2,\dots,
\end{equation*}
where $\zeta( \cdot )$ denotes the Riemann zeta function.
Then our results above imply that
\begin{equation*} \begin{aligned}
u_n
&= \sum_{k=1}^n \frac{ k! }{ n! }
        B_{n,k}\left( \frac{1}{\zeta( \alpha )},
			\frac{2!~ 2^{-\alpha}}{ \zeta(\alpha)},
			\frac{3!~ 3^{-\alpha}}{ \zeta(\alpha)},
			\dots,
			\frac{ (n-k+1)! ~(n-k+1)^{-\alpha} }{ \zeta(\alpha) }
		\right) \\
&= \sum_{k=1}^n \frac{ k! }{ n! \zeta^k(\alpha) }
	B_{n,k}\Big( 1, 2! ~ 2^{-\alpha}, 3! ~ 3^{-\alpha}, \dots,
			(n-k+1)! ~(n-k+1)^{-\alpha} \Big).
\end{aligned} \end{equation*}
It is not immediately clear how to simplify this probability using basic Bell polynomial identities.
Nonetheless, from Theorem~\ref{thm:Kprob}, we have that for $k \in [n]$,
\begin{equation*}
\Pr[ K_n = k \mid E_n, \alpha ]
= \frac{ k! \zeta^{-k}(\alpha) B_{n,k}\left( 1,
                        2! ~ 2^{-\alpha},
                        \dots,
                        (n-k+1)!~(n-k+1)^{-\alpha} \right) }
	{\sum_{\ell=1}^n \ell! \zeta^{-\ell}(\alpha)
		B_{n,\ell}\left( 1, 2! ~ 2^{-\alpha}, \dots,
                        (n-\ell+1)!~ (n-\ell+1)^{-\alpha} \right) },
\end{equation*}
and the Bell polynomials appearing on the right-hand side can be computed in quadratic time according to the recurrence relation~\citep[][Equations 11.11, 11.12]{Charalambides2002}
\begin{equation*}
B_{n,k}\left( \mu_1, \mu_2,\dots,\mu_{n-k+1} \right)
= \sum_{j=1}^{n-k+1} \mu_j
	B_{n-j,k-1}\left( \mu_1, \mu_2,\dots,\mu_{n-j-k} \right).
\end{equation*}
Thus, even in the absence of a closed-form expression for $\Pr[ K_n \mid E_n, \alpha ]$, the distribution of $K_n$ can be obtained numerically.

\end{document}